\newcommand{\C}{\mathbb{C}}
\newcommand{\R}{\mathbb{R}}
\newcommand{\Z}{\mathbb{Z}}
\newcommand{\g}{\frak{g}}
\def\R{{\mathbb R}}
\def\C{{\mathbb C}}
\def\Z{{\mathbb Z}}
\def\g{\mathfrak{g}}
\newtheorem{dfn}{Definition}[section] 
\newtheorem{thm}[dfn]{Theorem} 
\newtheorem{prop}[dfn]{Proposition} 
\newtheorem{cor}[dfn]{Corollary} 
\newtheorem{lemma}[dfn]{Lemma} 
\newtheorem{example}[dfn]{\it Example} 
\newtheorem{remark}[dfn]{\it Remark} 
\title[Uniformizations of compact Sasakian manifolds]{Uniformizations of compact Sasakian manifolds}
\author{Hisashi Kasuya}
\address{
Department of Mathematics, Graduate School of Science, Osaka University, Osaka,
Japan
}
\email{kasuya@math.sci.osaka-u.ac.jp}
\author{Natsuo Miyatake}
\address{Mathematical Science Center for Co-creative Society, Tohoku University, 468-1 Aramaki Azaaoba, Aoba-ku, Sendai 980-0845, Japan.}
\email{natsuo.miyatake.e8@tohoku.ac.jp}
\email{natsuo.m.math@gmail.com}
\thanks{This work was partially supported by JSPS KAKENHI Grant Number 19H01787.
}
\subjclass[2010]{14D07, 32Q30, 32L05, 53C25,53C07}
\keywords{Sasakian manifold, Uniformization, Variation of Hodge structures, Higgs bundle}
\begin{document}
\begin{abstract}
We give a criterion for compact Sasakian manifolds to be deformed to Sasakian manifolds which are locally isomorphic to circle bundles of anti-canonical bundles over Hermitian symmetric spaces as 
a Sasakian analogue of Simpson's uniformization results related to variations of Hodge structure and Higgs bundles.
\end{abstract}

\maketitle

\section{Introduction}
We are interested in classifications of compact Sasakian manifolds.
Sasakian geometry is an odd-dimensional counterpart of K\"ahler geometry.
However, in contrast to classifications of compact K\"ahler manifolds up to biholomorphism, the existence of a finite dimensional local moduli space of a compact Sasakian manifold is not given by deformation theory.
Sasakian structures are usually defined by contact CR-structures $(T^{1,0}_M,\eta)$ on odd-dimensional manifolds $M$ satisfying the “normality” condition.
It is known that a space of locally non-isomorphic CR structures on a compact strongly pseudo-convex CR manifold can have arbitrary many parameters (see \cite{BSW}).
For classifying $3$-dimensional compact Sasakian manifolds, it is a great idea to classify Sasakian structures up to deformations of certain types.
In \cite{Bel1, Bel2}, Belgun proves that any $3$-dimensional compact Sasakian manifold is a deformation of a standard one;
locally isomorphic to one of canonical homogeneous (left-invariant) Sasakian structures on $S^{3}$, $\widetilde{SL}_{2}(\R)$, $Nil_{3}$ (see also \cite{Ge}).

Belgun's result is an analogue of uniformizations of compact Riemannian surfaces.
It is natural to expect that we can prove a Sasakian version of uniformization results on higher dimensional compact K\"ahler manifolds in terms of Belgun's arguments.

The most important part of uniformizations of compact Riemannian surfaces is to prove that every Riemannian surface of genus $\ge 2$ admits a metric of constant negative curvature.
In \cite{Hi}, Hitchin reproves this statement as a corollary of the Kobayashi-Hitchin correspondence of Higgs bundles.
In \cite{SimC}, Simpson gives a uniformization theorem of higher dimensional compact K\"ahler manifolds by generalizing Hitchin's idea.
Precisely, Simpson shows that the existence of a uniformizing variation of Hodge structure is a criterion for uniformizing a compact K\"ahler manifold by a Hermitian symmetric space.
By using this, he shows that the stability of a certain Higgs bundle is a criterion for uniformizing a compact K\"ahler manifold by the unit ball.

The main result of this paper is to give a Sasakian version of Simpson's uniformization theorem.
For a Hermitian symmetric space $D$ of non-compact type, we can define the homogeneous Sasakian structure $(T_{D}^{1,0},\eta_{D})$ on the circle bundle $S^{1}(\bigwedge^m T^{1,0}_{D})$ of the determinant line bundle of the holomorphic tangent bundle of $D$.
We give a criterion for uniformizing compact Sasakian manifolds by the homogeneous Sasakian structure $(T_{D}^{1,0},\eta_{D})$.

\begin{thm}
Let $(M, T^{1,0}_{M}, \eta)$ be a compact Sasakian manifold.
If $(M,T^{1,0}_{M}, \eta)$ admits a uniformizing $G$-variations of Hodge structure $(G,\rho, h)$ and $c_{1,B}(T_{M})=-C[d\eta]$ for a positive constant $C$, then
the universal covering $\widetilde{M}$ with the lifting of some $A^1_{B}$-deformation of $( T^{1,0}_{M}, \eta)$ is almost isomorphic to $\widetilde{S^{1}}(\bigwedge^m T^{1,0}_{D})$ with the lifting of the homogeneous Sasakian structure $(T^{1,0}_{D},\eta_{D})$
where $\widetilde{S^{1}}(\bigwedge^m T^{1,0}_{D})$ is the universal covering of $S^{1}(\bigwedge^m T^{1,0}_{D})$.

\end{thm}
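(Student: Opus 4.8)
The plan is to transport Simpson's argument to the transverse K\"ahler geometry of $M$ and then control the Reeb direction separately. First I would pass to the universal covering $\widetilde{M}$ and record the data attached to the uniformizing $G$-VHS $(G,\rho,h)$: a flat $G$-bundle, its reduction to the maximal compact subgroup given by the Hodge metric $h$, and the associated basic Higgs field. Because the VHS is basic (invariant along the Reeb foliation), all of this descends to the transverse holomorphic structure, and Griffiths transversality produces a horizontal holomorphic period map $p\colon \widetilde{M}\to D=G/K$ into the Hermitian symmetric space $D$ of non-compact type attached to $(G,\rho)$. The content of the uniformizing hypothesis is that the transverse Higgs field is an isomorphism onto the appropriate summand, so that the differential of $p$ is an isomorphism on transverse holomorphic tangent spaces.

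Next I would upgrade this infinitesimal statement to a global one. Since $M$ is compact, the Hodge metric is complete in the transverse directions, and a horizontal holomorphic map that is a transverse local biholomorphism and is complete is a covering map onto $D$; as $D$ is simply connected, $p$ becomes a transverse biholomorphism. At this stage the transverse K\"ahler structure of $\widetilde{M}$ is identified with that of $D$. This is the step where I would lean most heavily on the Sasakian analogue of Simpson's correspondence, namely the identification of uniformizing VHS with transverse local isomorphisms onto $D$.

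The role of the hypothesis $c_{1,B}(T_{M})=-C[d\eta]$ is to pin down the correct circle-bundle model. The class $[d\eta]$ is, up to scale, the transverse K\"ahler class, and its proportionality with the basic canonical class forces the transverse polarization to be the anti-canonical one, so that the relevant $S^{1}$-bundle is exactly $S^{1}(\bigwedge^m T^{1,0}_{D})$ and the homogeneous contact form $\eta_{D}$ matches $\eta$ up to the constant $C$. The deformation $A^1_{B}$ enters because the Higgs bundle attached to the original $(T^{1,0}_{M},\eta)$ need not itself be a system of Hodge bundles; the $\C^{\ast}$-action scaling the transverse Higgs field (the $A^1_{B}$-deformation) flows it to the VHS fixed point, at which the transverse structure becomes precisely that of $D$. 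One then lifts $p$ to the total spaces, matching the Reeb directions and the contact forms up to $C$, to obtain the desired almost isomorphism $\widetilde{M}\to \widetilde{S^{1}}(\bigwedge^m T^{1,0}_{D})$.

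The main obstacle I anticipate is not the transverse K\"ahler uniformization, which follows Simpson closely, but the passage from a transverse biholomorphism to an honest isomorphism of CR and contact structures. Controlling the Reeb flow and the contact form under the period map, so that the lift intertwines $(T^{1,0}_{M},\eta)$ with $(T^{1,0}_{D},\eta_{D})$ up to the constant $C$ rather than merely identifying the transverse leaf spaces, is the Sasakian-specific heart of the argument, and it is exactly what the normalization $c_{1,B}(T_{M})=-C[d\eta]$ and the $A^1_{B}$-deformation are set up to make possible.
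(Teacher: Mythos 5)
Your outline captures the transverse part of the argument (period map, local isomorphism from the uniformizing hypothesis, completeness and covering), but it leaves a genuine gap exactly at the step you yourself flag as the Sasakian-specific heart: the construction of the lift of the period map $h\colon\widetilde{M}\to D$ to a map into the circle bundle $S^{1}(\bigwedge^{m}T^{1,0}_{D})$. ``One then lifts $p$ to the total spaces'' is not automatic, and the paper's mechanism for it is the real content of the hypothesis $c_{1,B}(T^{1,0}_{M})=-C[d\eta]$: this condition forces quasi-regularity (Remark \ref{Neg}), and the equivariant-cohomology classification of $S^{1}$-lifts on line bundles (Proposition \ref{equiiso}) then shows that some power $(\bigwedge^{n}T^{1,0}_{M})^{l}$ is equivariantly isomorphic to the twisted trivial bundle $E_{-lC}$. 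Since $dh$ identifies $\bigwedge^{n}T^{1,0}_{M}$ with $L=h^{\ast}\bigwedge^{m}T^{1,0}_{D}$, the resulting nowhere-vanishing equivariant section of $L^{l}$ is what produces the $\rho$-equivariant map $f\colon\widetilde{M}\to G/V^{\prime\prime}$ lifting $h$, with the crucial normalization $df(\tilde\xi)=lC\,\xi_{D}$ on the Reeb direction. Without this step you have no map between the odd-dimensional manifolds; note also that the paper runs the completeness/covering argument for $f$, a local isometry between manifolds of equal dimension, not for the dimension-dropping transverse map $p$, so controlling the Reeb direction is a prerequisite for, not a consequence of, that argument.

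Second, your explanation of why the $A^{1}_{B}$-deformation is needed is incorrect for this theorem. You attribute it to the $\C^{\ast}$-flow taking a Higgs bundle to a VHS fixed point; but here the variation of Hodge structure is a hypothesis, already given, and no such flow occurs in this proof (that mechanism belongs to the Higgs-bundle criterion in the next section). The actual reason is the one recorded in the paper's remark after the proof: the lift $f$ satisfies $df(T^{1,0}_{\widetilde{M}})=T^{1,0}_{D}$ only modulo $\langle\xi\rangle$, so $f$ need not be a CR map for the original structure. One corrects this by the explicit basic $1$-form $\tau=-\tilde\eta+\frac{1}{C}f^{\ast}\eta_{D}$ (basic because $\tau(\tilde\xi)=0$ and $d\tau$ is basic), and the $A^{1}_{B}$-deformation $(T^{1,0}_{\widetilde{M}\tau},\tilde\eta+\tau)$, which descends to $M$ by $\rho$-equivariance, is then almost isomorphic to the homogeneous model via $f$. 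You correctly sensed that this is where the difficulty lies, but the mechanism you propose for resolving it does not apply, and the lift-construction via the equivariant trivialization of a power of the anti-canonical bundle is missing from your argument.
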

$c_{1,B}(T_{M})$ is the first basic Chern class of $(M, T^{1,0}_{M}, \eta)$ which is also very important for studying Sasaki-Einstein geometry (see \cite{BG}).
Note that the condition $c_{1,B}(T_{M})=C[d\eta]$ for a positive constant $C$ is a criterion for the existence of a Sasaki-Einstein metric on a compact Sasakian manifold in contrast to our result.
An {\em $A^1_{B}$-deformation} is a slight modification of a second type deformation in the sense of Belgun \cite{Bel2} (see also a deformation of type II in \cite[Definition 7.5.9]{BG}).
An almost isomorphism is a diffeomorphism commuting with almost contact structures associated with Sasakian manifolds equivalently Sasakian isomorphism up to rescalings.

Combining this criterion and the Sasakian version of Simpson's correspondence between flat bundles and Higgs bundles (\cite{SimC,SimL}) proved in \cite{BK, BK2},
we obtain a criterion for uniformizing compact Sasakian manifolds in terms of stablities of Higgs bundles. 
For a Sasakian manifold $(M, T^{1,0}_{M},\eta)$, we have the canonical transverse holomorphic foliation ${\mathcal F}_{\xi}$ and hence we can define holomorphic structures and Higgs bundle structures on “basic” vector bundles over $(M,{\mathcal F}_{\xi})$.
We have a canonical Basic Higgs bundle structure $(E_{M},\, \theta_{M})$ on the vector bundle $E_{M}=T^{1,0}_{M}\oplus \C_{M}
$ where $T^{1,0}_{M}$ is the holomorphic tangent bundle associated with a CR structure and $\C_{M}$ is the trivial line bundle.

\begin{thm}\label{Iuni2}
We assume:
\begin{enumerate}
\item The basic Higgs bundle $(E_{M},\, \theta_{M})$ is stable.
\item The equality 
$$
\int_{M} \left(2c_{2, B}(T^{1,0}_{M}) -
\frac{n}{n+1}c_{1, B}(T^{1,0}_{M})^2\right)\wedge(d\eta)^{n-2}\wedge\eta \, =\, 0
$$
holds.
\item $c_{1, B}(T^{1,0}_{M})=-C[d\eta]$ for some positive constant $C$.
\end{enumerate}
Then the universal covering $\widetilde{M}$ with the lifting of some $A^1_{B}$-deformation of $( T^{1,0}_{M}, \eta)$ is almost isomorphic to the $\widetilde{S^{1}}(\bigwedge^m T^{1,0}_{D})$ with the lifting of the homogeneous Sasakian structure $(T^{1,0}_{D},\eta_{D})$ for $D=SU(n,1)/S(U(n)\times U(1))$.

\end{thm}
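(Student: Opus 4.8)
The plan is to deduce the statement from the first theorem by manufacturing, out of hypotheses (1) and (2), a uniformizing $SU(n,1)$-variation of Hodge structure; hypothesis (3) then supplies verbatim the normalization $c_{1,B}(T^{1,0}_M)=-C[d\eta]$ that the first theorem requires. The guiding observation is that $E_M=T^{1,0}_M\oplus \C_M$ has basic rank $r=n+1$, and, since $\C_M$ is trivial, $c_{1,B}(E_M)=c_{1,B}(T^{1,0}_M)$ and $c_{2,B}(E_M)=c_{2,B}(T^{1,0}_M)$. Hence the integrand in (2) is exactly the transverse (basic) Bogomolov--Gieseker discriminant $2c_{2,B}(E_M)-\frac{r-1}{r}c_{1,B}(E_M)^2$ for $r=n+1$, and condition (2) asserts that the basic Bogomolov--Gieseker inequality for the Higgs bundle $(E_M,\theta_M)$, evaluated against the transverse polarization $(d\eta)^{n-2}\wedge\eta$, holds \emph{with equality}.

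First I would apply the Sasakian Kobayashi--Hitchin correspondence for basic Higgs bundles (\cite{BK, BK2}): by (1), $(E_M,\theta_M)$ is stable, hence carries a harmonic (basic Hermitian--Einstein) metric $h$. Since $c_{1,B}(E_M)=c_{1,B}(T^{1,0}_M)=-C[d\eta]\neq 0$ by (3), the Hermitian--Einstein connection is not flat but only projectively so, so the relevant local system will appear only after a determinant twist, a point I return to below. The bundle $(E_M,\theta_M)$ is a basic \emph{system of Hodge bundles}: the splitting $E_M=\C_M\oplus T^{1,0}_M$ is the Hodge grading, and the canonical $\theta_M$ sends the top piece $\C_M$ into $T^{1,0}_M\otimes\Omega^{1,0}_B\cong\operatorname{End}(T^{1,0}_M)$ as the identity endomorphism while annihilating $T^{1,0}_M$ (here $\Omega^{1,0}_B$, the basic holomorphic cotangent bundle, is dual to $T^{1,0}_M$). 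Being a stable system of Hodge bundles, it is a fixed point of the $\C^{*}$-action, so its harmonic bundle is of variation-of-Hodge-structure type, with Hodge numbers $(n,1)$.

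The main step---and the point I expect to be the principal obstacle---is to promote this to a genuine \emph{uniformizing} variation of Hodge structure using the equality in (2). Following Simpson \cite{SimC}, one proves the basic Bogomolov--Gieseker inequality by a transverse Chern--Weil computation against $h$, and its equality case forces the full curvature of $(E_M,h,\theta_M)$ to be projectively flat. Projective flatness means that, after tensoring $E_M$ by a suitable root of $\det E_M=\bigwedge^{n}T^{1,0}_M$, one obtains a genuine flat degree-zero bundle whose holonomy lies in $SU(n,1)$ and which carries a complex variation of Hodge structure modeled on $D=SU(n,1)/S(U(n)\times U(1))$; this determinant-root twist is precisely the datum encoded by passing to the circle bundle $S^{1}(\bigwedge^m T^{1,0}_D)$ of the anti-canonical bundle in the conclusion. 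Because $\theta_M$ is the tautological section, the derivative of the period map---namely $\theta_M$ read as the identity $T^{1,0}_M\to\operatorname{Hom}(\C_M,T^{1,0}_M)$---is an isomorphism, so the period map is a local biholomorphism and the variation of Hodge structure is uniformizing. Carrying out this equality-case analysis in the transverse Sasakian category, where the Chern--Weil integrals are taken against $(d\eta)^{n-2}\wedge\eta$ and one works with the basic Laplacian and the transverse Hodge identities in place of their honest Kähler counterparts, is the delicate technical heart of the argument.

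Finally, with a uniformizing $SU(n,1)$-variation of Hodge structure in hand and with hypothesis (3) supplying $c_{1,B}(T^{1,0}_M)=-C[d\eta]$, I would apply the first theorem directly to conclude that the universal covering $\widetilde{M}$, equipped with the lift of a suitable $A^1_B$-deformation of $(T^{1,0}_M,\eta)$, is almost isomorphic to $\widetilde{S^{1}}(\bigwedge^m T^{1,0}_D)$ with the lift of the homogeneous Sasakian structure $(T^{1,0}_D,\eta_D)$ for $D=SU(n,1)/S(U(n)\times U(1))$, as asserted.
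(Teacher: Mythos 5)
Your overall strategy is the paper's: manufacture a uniformizing $SU(n,1)$-variation of Hodge structure out of the canonical Higgs bundle $(E_M,\theta_M)$ and feed it, together with hypothesis (3), into the first uniformization theorem; your identification of condition (2) with the rank-$(n+1)$ Bogomolov--Gieseker discriminant and your observation that $\theta_M$ being the tautological identity makes the period map an immersion are both exactly what the paper uses. The divergence is at the step you yourself flag as the ``delicate technical heart,'' and there your proposed mechanism is both harder and partly obstructed. You want to run the Hermitian--Einstein equation on $(E_M,\theta_M)$ itself, invoke the equality case of a transverse Bogomolov--Gieseker inequality to get projective flatness, and then untwist by ``a suitable root of $\det E_M=\bigwedge^n T^{1,0}_M$.'' That root need not exist as an honest basic line bundle on $M$, and the equality-case curvature analysis is not something the paper's quoted results provide. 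The paper avoids both problems with a Sasakian-specific trick: since $d\eta$ is exact as an ordinary form but $[d\eta]\neq 0$ in basic cohomology, the $C^\infty$-trivial line bundle $M\times\C$ with connection $d-2\pi\sqrt{-1}C'\eta$ is a basic holomorphic line bundle $E_{C'}$ with $c_{1,B}(E_{C'})=C'[d\eta]$ for \emph{any} real $C'$ (Example \ref{line}). Taking $C'=C/(n+1)$ gives $c_{1,B}(E_M\otimes E_{C'})=0$ exactly --- no root extraction, no finite cover --- and condition (2) then says precisely that $\int_M c_{2,B}(E_M\otimes E_{C'})\wedge(d\eta)^{n-2}\wedge\eta=0$. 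These are verbatim the hypotheses of Theorem \ref{harmonicmetric}, which hands you a genuinely flat canonical connection and a harmonic metric with no equality-case analysis at all. The weight-one Hodge decomposition $E^{1,0}=E_{C'}$, $E^{0,1}=T^{1,0}_M\otimes E_{C'}$, the $U(1)$-fixed-point argument you sketch, and the signature count giving $SU(n,1)$ then proceed as you describe. So: right skeleton, but you should replace the projective-flatness/determinant-root step with the $E_{C'}$ twist, which is the point where the Sasakian setting is genuinely friendlier than the K\"ahler one.
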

This result is based on the existence of Hermitian-Einstein metrics on stable basic Higgs bundles over compact Sasakian manifolds.
In \cite{Zh}, by using the existence of Sasaki-Einstein metrics, the result of uniformization by the odd-dimensional sphere is proved. 

As a corollary of Theorem \ref{Iuni2}, we obtain an analogue of the existence of a metric of constant negative curvature on a Riemannian surface of genus $\ge 2$.
\begin{cor}
Let $(M,T^{1,0}_{M},\eta)$ be a compact $3$-dimensional Sasakian manifold with $c_{1, B}(T^{1,0})=-C[d\eta]$.
Then the universal covering $\widetilde{M}$ with the lifting of some $A^1_{B}$-deformation of $( T^{1,0}_{M}, \eta)$ is almost isomorphic to $\widetilde{PU}(1,1)$ with a left-invariant Sasakian structure.
\end{cor}
We have $\widetilde{PU}(1,1)=\widetilde{SL}_{2}(\R)$.
This result is a part of Belgun's classification in \cite{Bel1, Bel2}.
Belgun uses the non-K\"ahler complex structure on $M\times S^{1}$ for a compact Sasakian manifold $M$.
On the other hand, we do not use such a complex structure.

\bigskip

\noindent {\bf Acknowledgement.} We are grateful to Professor Takuro Mochizuki for his valuable comments on the earlier works \cite{BK, BK2} which led to  the beginning of this project.

\section{Sasakian Geometry}
\subsection{Sasakian structures on CR manifolds}
Let $M$ be a $(2n+1)$-dimensional real smooth manifold. 
A {\em CR-structure} on $M$ is an 
$n$-dimensional complex involutive sub-bundle $T^{1,0}_{M}$ of the complexified tangent bundle $TM_{\C}\,=\, 
TM\otimes_{\mathbb R} {\C}$ such that $T^{1,0}_{M}\cap T^{0,1}_{M}=\{0\}$ 
where $T^{0,1}_{M}=\overline{T^{1,0}_{M}}$.
Define the real $2n$-dimensional sub-bundle $S=TM\cap (T^{1,0}_{M}\oplus T^{0,1}_{M})\subset TM$.
We have the almost complex structure $I: S\to S$ associated with the decomposition $ S_{\C}=T^{1,0}_{M}\oplus T^{0,1}_{M}$.

A {\em 
strongly pseudo-convex CR structure} on $M$ is a pair $(T^{1,0}_{M} , \eta) $
of a CR structure $T^{1,0}_{M}$ and a contact 
$1$-form $\eta$ such that $\ker\eta=S$ and the bilinear form defined by 
$L_{\eta}(X,Y)=d\eta(X, IY)$ on $S$ is a Hermitian metric on $(S, I)$.
Take the Reeb vector field $\xi$ associated with the contact $1$-form $\eta$.
$(T^{1,0}_{M} , \eta) $ is {\em Sasakian} if for any smooth section $X$ of $T^{1,0}_{M}$ the Lie bracket 
$[\xi, X]$ is also a smooth section of $T^{1,0}_{M}$.
We consider the $1$-dimensional foliation ${\mathcal F}_{\xi}$ on $M$ generated by $\xi$.
If $(T^{1,0}_{M} , \eta) $ is Sasakian, then $T^{1,0}_{M}$ defines a transverse holomorphic structure on ${\mathcal F}_{\xi}$ and $d\eta$ is a transverse K\"ahler form on ${\mathcal F}_{\xi}$.
We say that a Sasakian structure is quasi-regular if every leaf of ${\mathcal F}_{\xi}$ is closed.
If $M$ is compact and a Sasakian structure on $M$ is quasi-regular, then the flow of $\xi$ induces an action $S^{1}\times M\to M$ (see \cite{Wa}).

Let $(M, T^{1,0}_{M} , \eta) $ be a Sasakian manifold.
A differential form $\omega$ on $M$ is called {\em basic} if the equations
\[
i_{\xi}\omega=0= i_{\xi} d\omega
\]
hold.
We denote by $A^{\ast}_{B}(M)$ the subspace of basic 
forms in the de Rham complex $A^{\ast}(M)$.
Then
$A^{\ast}_{B}(M)$ is a sub-complex of the de Rham complex $A^{\ast}(M)$.
Denote by 
$H_{B}^{\ast}(M)$
the cohomology of the basic de Rham complex $A^{\ast}_{B}(M)$. 
We note that $d\eta\in A^{2}_{B}(M)$ and $[d\eta]\not=0\in H_{B}^{2}(M)$ if $M$ is compact.
We have the bigrading $A^{r}_{B}(M)_{\C}=\bigoplus_{p+q=r} A^{p,q}_{B}(M)$ as well as the decomposition of the exterior 
differential $d_{\vert A^{r}_{B}(M)_{\C}}=\partial_{B}+\overline\partial_{B}$ on $A^{r}_{B}(M)_{\C}$, so that $$\partial_{B}:A^{p,q}_{B}(M)\to
A^{p+1,q}_{B}(M)\ \text{ and } \
\overline\partial_{B}:A^{p,q}_{B}(M)\to 
A^{p,q+1}_{B}(M)\, .$$ 
We note that $d\eta\in A^{1,1}_{B}(M)$.
For a strongly pseudo-convex CR-manifold $(M, T^{1,0}_{M} , \eta)$
there exists a unique affine connection $\nabla^{TW}$ on $TM$ such that the following
conditions hold (\cite{Tan, Web}):
\begin{enumerate}
\item $S$ is parallel with respect to $\nabla^{TW}$.

\item $\nabla^{TW}I\,=\nabla^{TW}d\eta=\nabla^{TW}\eta\,=\nabla^{TW}\xi=\,0$.

\item The torsion $T^{TW}$ of the affine connection $\nabla^{TW}$ satisfies the equation
\[T^{TW} (X,\,Y)\,=\, -d\eta (X,\,Y)\xi
\]
for all $X,\,Y\,\in\, S_{x}$ and $x\,\in\, M$.
\end{enumerate}
This affine connection $\nabla^{TW}$ is called the {\em Tanaka--Webster connection}. It
is known that $ 
(T^{1,0},\eta)$ is a Sasakian manifold if and only if 
$T^{TW}(\xi,\,v)\,=\,0$ for all $v\, \in\, TM$.

Define the homomorphism $\Phi_{\xi}: TM\to TM $ by the extension of $I: S\to S$ satisfying $\Phi_{\xi}(\xi)=0$.
Then, $\Phi^{2}_{\xi}=-{\rm Id}+\xi\otimes \eta$.
We call $\Phi_{\xi}$ the {\em almost contact structure} associated with a Sasakian structure $(T^{1,0}_{M} , \eta) $.
We define the Riemannian metric $g_{\eta}(X,Y)=\eta(X)\eta(Y)+ d\eta(X, \Phi_{\xi} Y)$.
We call $g_{\eta}$ the {\em Sasakian metric} associated with a Sasakian structure $(T^{1,0}_{M} , \eta) $.

Let $(N,J)$ be a complex manifold.
A smooth map $f:M\to N$ is a holomorphic if $df \circ \Phi_{\xi}=J\circ df$.
A holomorphic map $f:M\to N$ satisfies $df (\xi)=0$.
We note that the holomorphic condition of a smooth map $f:M\to N$ depends only on the transverse holomorphic foliation ${\mathcal F}_{\xi}$.
A smooth function $f:M\to \C$ is holomorphic if and only if $f\in A_{B}^{0}(M)$ and $\bar\partial_{B}f=0$.

\subsection{Morphisms, equivalences and deformations}

Let $(M_{1}, T^{1,0}_{M_{1}},\eta_{1})$ and $(M_{2}, T^{1,0}_{M_{2}},\eta_{2})$ be two Sasakian manifolds.
A smooth map $f:M_{1}\to M_{2}$ is a {\em CR map} if $df ( T^{1,0}_{M_{1}})\subset T^{1,0}_{M_{2}}$.
a CR map $f:M_{1}\to M_{2}$ satisfies $f^{\ast}\eta_{2}=g(x)\eta_{1}$ for some function $g(x) $ on $M_{1}$.
By the standard arguments (see \cite[Lemma 2.3]{BaK}), we have:
\begin{lemma}
Let $f:M_{1}\to M_{2}$ be a diffeomorphism.
Then the following two conditions are equivalent:
\begin{itemize}
\item $f$ is a CR map and $f^{\ast}\eta_{2}=\eta_{1}$.
\item $f$ is isometry between $(M_{1}, g_{\eta_{1}})$ and $(M_{2},g_{\eta_{2}})$ and $df (\xi_{1})=\xi_{2}$.
\end{itemize}
\end{lemma}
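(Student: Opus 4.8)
The plan is to prove the two implications separately, using throughout the two defining properties of the Reeb field, $\eta(\xi)=1$ and $i_{\xi}d\eta=0$, together with the formulas $\Phi_{\xi}^{2}=-{\rm Id}+\xi\otimes\eta$ and $g_{\eta}(X,Y)=\eta(X)\eta(Y)+d\eta(X,\Phi_{\xi}Y)$. The single identity that drives everything is $g_{\eta}(\xi,X)=\eta(X)$, which holds because $g_{\eta}(\xi,X)=\eta(\xi)\eta(X)+d\eta(\xi,\Phi_{\xi}X)=\eta(X)$ by $\eta(\xi)=1$ and $i_{\xi}d\eta=0$. In particular $\eta$ is the $g_{\eta}$-dual one-form $\xi^{\flat}$ of $\xi$, so that $\xi$ is a unit vector field and $S=\ker\eta=\xi^{\perp}$.

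For the implication (CR map and $f^{\ast}\eta_{2}=\eta_{1}$) $\Rightarrow$ (isometry and $df(\xi_{1})=\xi_{2}$), from $f^{\ast}\eta_{2}=\eta_{1}$ I get $f^{\ast}d\eta_{2}=d\eta_{1}$ and $df(S_{1})=S_{2}$. Then $df(\xi_{1})=\xi_{2}$ follows from the uniqueness of the Reeb field of $\eta_{2}$: one has $\eta_{2}(df\xi_{1})=\eta_{1}(\xi_{1})=1$, and for every $X$ the value $d\eta_{2}(df\xi_{1},dfX)=(f^{\ast}d\eta_{2})(\xi_{1},X)=d\eta_{1}(\xi_{1},X)=0$, so $i_{df\xi_{1}}d\eta_{2}=0$ as $f$ is a diffeomorphism. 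Because $f$ is a CR diffeomorphism, $df$ restricts to an isomorphism $T^{1,0}_{M_{1}}\to T^{1,0}_{M_{2}}$ and hence intertwines the almost complex structures on $S$; combined with $df(\xi_{1})=\xi_{2}$ and $\Phi_{\xi}(\xi)=0$ this yields $df\circ\Phi_{\xi_{1}}=\Phi_{\xi_{2}}\circ df$ on all of $TM_{1}$. Substituting this together with $f^{\ast}\eta_{2}=\eta_{1}$ and $f^{\ast}d\eta_{2}=d\eta_{1}$ into the formula for $g_{\eta}$ gives $g_{\eta_{2}}(dfX,dfY)=g_{\eta_{1}}(X,Y)$, so $f$ is an isometry.

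For the converse, assume $f$ is an isometry with $df(\xi_{1})=\xi_{2}$. Since $\eta_{i}=\xi_{i}^{\flat}$, an isometry carrying $\xi_{1}$ to $\xi_{2}$ carries $\xi_{1}^{\flat}$ to $\xi_{2}^{\flat}$, that is $f^{\ast}\eta_{2}=\eta_{1}$, whence $f^{\ast}d\eta_{2}=d\eta_{1}$ and $df(S_{1})=S_{2}$ (as $S=\xi^{\perp}$). It remains to show $df(T^{1,0}_{M_{1}})=T^{1,0}_{M_{2}}$. The main point is that on $S$ one has $g_{\eta}(X,Y)=d\eta(X,\Phi_{\xi}Y)$ with both $g_{\eta}|_{S}$ and $d\eta|_{S}$ nondegenerate, so $\Phi_{\xi}|_{S}$ is \emph{determined} by $g_{\eta}$ and $d\eta$. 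Since $df$ preserves both tensors and maps $S_{1}$ onto $S_{2}$, it must intertwine $\Phi_{\xi_{1}}|_{S_{1}}$ and $\Phi_{\xi_{2}}|_{S_{2}}$, that is the almost complex structures $I_{1},I_{2}$; passing to the $+i$-eigenbundles gives $df(T^{1,0}_{M_{1}})=T^{1,0}_{M_{2}}$, so $f$ is a CR map.

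The only real obstacle is this last step, reconstructing the CR structure from Riemannian data: a metric alone does not see a complex structure, so one must exploit that in the Sasakian setting $d\eta$ is itself metric-determined via $\eta=\xi^{\flat}$ and pairs nondegenerately with $g_{\eta}$ on $S$ to pin down $\Phi_{\xi}$. Equivalently, one could invoke the standard Sasakian identity $\nabla_{X}\xi=-\Phi_{\xi}X$ for the Levi-Civita connection of $g_{\eta}$: since an isometry intertwines Levi-Civita connections and $df(\xi_{1})=\xi_{2}$, this at once gives $df\circ\Phi_{\xi_{1}}=\Phi_{\xi_{2}}\circ df$. Everything else is bookkeeping of pullbacks, and no compactness or global hypothesis is needed.
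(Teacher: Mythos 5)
Your proof is correct. The paper itself gives no argument for this lemma, only the citation ``by the standard arguments (see \cite[Lemma 2.3]{BaK})'', and what you have written is exactly that standard argument: the identity $g_{\eta}(\xi,\cdot)=\eta$ reduces the forward implication to the uniqueness of the Reeb field and the intertwining of $\Phi_{\xi}$ by a CR diffeomorphism, while the converse recovers first $\eta=\xi^{\flat}$ (hence $d\eta$ and $S$) from the metric and the Reeb field, and then $\Phi_{\xi}\vert_{S}$ from the nondegenerate pairing $g_{\eta}(X,Y)=d\eta(X,\Phi_{\xi}Y)$ on $S$, which is precisely the point one must not skip.
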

A diffeomorphism satisfying the equivalent condition in this lemma is a {\em Sasakian isomorphism}.

A smooth map $f:M_{1}\to M_{2}$ is a {\em almost contact map} if $df \circ \Phi_{\xi_{1}}=\Phi_{\xi_{2}}\circ df$.
A almost contact map is a CR-map but the converse is not true.
We can easily check that an almost contact map $f:M_{1}\to M_{2}$ satisfies $f^{\ast}\eta_{2}=c\eta_{1}$ and $df (\xi_{1x})=c\xi_{2f(x)}$ for some constant $c $.
It is sufficient to check the above function $g(x)$ is constant.
By the basicness ${\mathcal L}_{\xi_{1}}d\eta_{1}={\mathcal L}_{\xi_{2}}d\eta_{2}=0$ and $df (\xi_{1x})=c(x)\xi_{2f(x)}$, we have
$${\mathcal L}_{\xi_{1}}(g)\eta_{1}={\mathcal L}_{\xi_{1}} (f^{\ast}\eta_{2}) =(i_{\xi_{1}}d+di_{\xi_{1}}) (f^{\ast}\eta_{2})=di_{\xi_{1}}f^{\ast}\eta_{2}=dg
$$
and so $dg=0$ on the sub-bundle $S={\rm ker} \eta_{1}\subset TM$.
It is sufficient to prove $dg (\xi_{1x})=0$ for any $x\in M$.
Since $d\eta$ is non-degenerate on $S$, we can take local sections $X, Y$ of $S$ such that $d\eta_{1} (X_{x},Y_{x})\not=0$.
By the property of the Tanaka-Webster connection $\nabla^{TW} $, 
we have 
\[\nabla^{TW}_{X}Y-\nabla^{TW}_{Y}X-[X,Y]= -d\eta (X,Y)\xi.
\] 
Since $\nabla^{TW}_{X}Y-\nabla^{TW}_{Y}X$ is a local section of $S$,
\[-d\eta_{1} (X,Y) dg (\xi_{1})=dg(\nabla^{TW}_{X}Y-\nabla^{TW}_{Y}X) -X(Y(g))+Y(X(g))=0.
\]
This implies $dg (\xi_{1x})=0$.

Let $(M, T^{1,0}_{M},\eta)$ be a Sasakian manifold.
A {\em rescaling} of $( T^{1,0}_{M},\eta)$ is a Sasakian structure $( T^{1,0}_{M\tau},R\eta)$ for a real number $R>0$.
The Reeb vector field of $R\eta$ is $\frac{1}{R}\xi$.
A {\em $A^1_{B}$-deformation} of $( T^{1,0}_{M},\eta)$ is a Sasakian structure $( T^{1,0}_{M\tau},\eta^{\tau})$ such that 
$\eta^{\tau}=\eta+\tau$ for $\tau\in A^1_{B}$ and $T^{1,0}_{M\tau}=\{X-\tau(X)\xi\vert X\in T^{1,0}_{M}\}$.
The Reeb vector field of $\eta^{\tau}$ is $\xi$ and the transverse holomorphic structure on ${\mathcal F}_{\xi}$ induced by $T^{1,0}_{M\tau}$ is same as the one induced by $T^{1,0}_{M}$.

By the above argument on almost contact maps and the standard arguments (see \cite[Proposition 8.1.1]{BG}), we have:
\begin{lemma}
Let $f:M_{1}\to M_{2}$ be a diffeomorphism.
Then the following two conditions are equivalent:
\begin{itemize}
\item $f$ is a Sasakian isomorphism up to rescalings.
\item $f$ is an almost contact map.
\end{itemize}

\end{lemma}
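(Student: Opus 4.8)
The plan is to prove the equivalence by treating the two implications separately, taking as given the computation carried out in the paragraph immediately preceding the statement: that every almost contact map $f$ is a CR map and satisfies $f^{\ast}\eta_{2}=c\eta_{1}$ together with $df(\xi_{1})=c\xi_{2}$ for a \emph{constant} $c$. The only genuinely nontrivial input, namely the constancy of the proportionality factor, is thereby already available, so the lemma becomes a matter of bookkeeping plus a positivity check.

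First I would treat the implication from almost contact map to Sasakian isomorphism up to rescaling. Assuming $f$ almost contact, I invoke the above to get a CR map with $f^{\ast}\eta_{2}=c\eta_{1}$, hence $f^{\ast}d\eta_{2}=c\,d\eta_{1}$, and the main thing left to verify is $c>0$. Since $f$ is a CR diffeomorphism, $df$ restricts to an isomorphism $S_{1}\to S_{2}$ intertwining the complex structures $I_{1}$ and $I_{2}$, so for $0\ne X\in S_{1}$ the Levi forms give
\[
0< L_{\eta_{2}}(dfX,dfX)=d\eta_{2}(dfX,df\,I_{1}X)=(f^{\ast}d\eta_{2})(X,I_{1}X)=c\,L_{\eta_{1}}(X,X),
\]
and as $L_{\eta_{1}}(X,X)>0$ this forces $c>0$. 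Rescaling $\eta_{1}$ to $c\eta_{1}$ leaves $T^{1,0}_{M_{1}}$ untouched, so $f$ is still a CR map and now satisfies $f^{\ast}\eta_{2}=c\eta_{1}$ exactly; that is, $f$ is a Sasakian isomorphism from $(T^{1,0}_{M_{1}},c\eta_{1})$ to $(T^{1,0}_{M_{2}},\eta_{2})$, i.e.\ a Sasakian isomorphism up to rescaling.

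For the converse I would assume there is $R>0$ with $f$ a Sasakian isomorphism from $(T^{1,0}_{M_{1}},R\eta_{1})$ to $(T^{1,0}_{M_{2}},\eta_{2})$, so $f$ is a CR map with $f^{\ast}\eta_{2}=R\eta_{1}$. The first point is that a rescaling does not alter the almost contact structure: the CR structure, hence $S$ and $I$, are unchanged, the Reeb field of $R\eta_{1}$ is $\frac{1}{R}\xi_{1}$, and the extension $\Phi_{\xi_{1}}$ of $I$ vanishing on the Reeb line is therefore the same operator. I then check $df\circ\Phi_{\xi_{1}}=\Phi_{\xi_{2}}\circ df$ on the splitting $TM_{1}=S_{1}\oplus\R\xi_{1}$: on $S_{1}$ both sides equal $df\circ I_{1}=I_{2}\circ df$ by the CR property together with $df(S_{1})=S_{2}$; on the Reeb line both sides vanish, since $\Phi_{\xi_{1}}(\xi_{1})=0$ while $df(\xi_{1})=R\xi_{2}$ yields $\Phi_{\xi_{2}}(df\,\xi_{1})=R\,\Phi_{\xi_{2}}(\xi_{2})=0$. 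The identity $df(\xi_{1})=R\xi_{2}$ follows from $f^{\ast}\eta_{2}=R\eta_{1}$ and the characterization of the Reeb field by $\eta(\xi)=1$, $i_{\xi}d\eta=0$, applied to $\frac{1}{R}df(\xi_{1})$.

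I expect the main obstacle to be not in the lemma proper but in the fact it leans on — the constancy of the scaling function, which is the Tanaka--Webster computation already done just before the statement. Within the argument above, the only steps requiring care are the sign $c>0$, handled by the Levi-form positivity, and the invariance of $\Phi_{\xi}$ under rescaling, which is immediate from the definitions; everything else is a direct splitting into the contact distribution and the Reeb direction.
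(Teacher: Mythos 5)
Your proposal is correct and follows essentially the route the paper intends: the paper itself gives no written proof beyond citing the preceding constancy computation for the factor $c$ and ``standard arguments'' (referring to \cite[Proposition 8.1.1]{BG}), and your argument is exactly that computation plus the routine verifications --- the Levi-form positivity check giving $c>0$ and the splitting $TM_{1}=S_{1}\oplus\R\xi_{1}$ for the converse. No gaps; the one step the paper leaves implicit that genuinely needs saying, namely $c>0$ so that the rescaling is admissible, you handle correctly.
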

A diffeomorphism satisfying the equivalent condition in this lemma is an {\em almost isomorphism}.
Obviously, an almost isomorphism is a CR-diffeomorphism.

\section{Basic vector bundles over Sasakian manifolds}
\subsection{Basic vector bundles}
We define structures on smooth vector bundles over Sasakian manifolds $(M, T_{M}^{1,0}, \eta)$ related to ${\mathcal F}_{\xi}$ in terms of Rawnsley's partial flat connections in \cite{Ra}.

A structure of basic vector bundle on a $C^\infty$ vector bundle $E$ is a 
linear
operator $\nabla_{\xi}\,:\, {\mathcal C}^{\infty}(E)
\,\longrightarrow\,
{\mathcal C}^{\infty}(E)$ such that
\[ \nabla_{\xi}(f s)\,=\,f\nabla_{\xi} s+ \xi(f) s.
\]
Consider the flow $\R\times M \to M$ generated by $\xi$.
Then if we have a lifting action $\R\times E\ni (t,v)\mapsto \phi_{t}(v)\in E$ of the flow $\R\times M \to M$ on a smooth vector bundle $E$, then we 
define the basic bundle structure on $E$ by
\[ \nabla_{\xi}(s)_{x}=\frac{d}{dt}_{\vert t=0}\phi_{t}(v)_{x}.
\]

A differential form $\omega\,\in\, A^{\ast}(M,\,E)$ with 
values in $E$ is called basic if 
the equations
\[
i_{\xi}\omega=0=\nabla_{\xi} \omega
\]
hold where we extend $\nabla_{\xi}: A^{\ast}(M,\,E)\to A^{\ast}(M,\,E)$.
Let $A^{\ast}_{B}(M,\, E)\,\subset\,A^{\ast}(M,\,E)$ denote the 
subspace of basic forms in the space $A^{\ast}(M,\,E)$ of differential forms with values in 
$E$.
A Hermitian metric on $E$ is called {\em basic} if it is $\nabla_{\xi}$-invariant.
Unlike the usual Hermitian metric, a basic vector bundle may not admits a basic Hermitian metric in general.
For a connection operator $\nabla : A^{\ast}(M,\,E)\to A^{\ast+1}(M,\,E)$ such that the covariant derivative of $\nabla$ along $\xi$ is $\nabla_{\xi}$, the curvature $R^{\nabla}\in A^{2}(M,\, {\rm End} (E))$ is basic i.e. $R^{\nabla}\in A^{2}_{B}(M,\, {\rm End} (E))$.
For the Chern forms $c_{i}(E,\nabla)\in A^{2i}(M)$ associated with $\nabla$, we have $c_{i}(E,\nabla)\in A_{B}^{2i}(M)$ and we define the basic Chern classes $c_{i,B}(E)\in H^{2i}_{B}(M)$ by the cohomology classes of $c_{i}(E,\nabla)$.

A structure of basic holomorphic bundle on a $C^\infty$ vector bundle $E$ is a
linear
differential operator $\nabla^{\prime\prime}\,:\, {\mathcal C}^{\infty}(E)
\,\longrightarrow\,
{\mathcal C}^{\infty}(E\otimes ( \langle \xi\rangle \oplus T^{0,1})^{\ast})$ such that
\begin{itemize}
\item for any $X\in \langle \xi\rangle \oplus T^{0,1}$, and any smooth function $f$ on $M$, the equation
\[ \nabla^{\prime\prime}_{X}(f s)\,=\,f\nabla^{\prime\prime}_{X} s+ X(f) s
\]
holds for all smooth sections $s$ of $E$, and
\item if we extend $\nabla^{\prime\prime}$ to $\nabla^{\prime\prime}\,:\, {\mathcal C}^{\infty}(E\otimes \bigwedge^{k} (\langle \xi\rangle \oplus T^{0,1})^{\ast}) \, \longrightarrow\, {\mathcal C}^{\infty}(E\otimes 
\bigwedge^{k+1} (\langle \xi\rangle \oplus T^{0,1})^{\ast})$, then $\nabla^{\prime\prime}\circ \nabla^{\prime\prime}\,=\,0$.
\end{itemize}

A basic holomorphic vector bundle $E$ has a canonical basic bundle structure corresponding to the derivative $\nabla^{\prime\prime}_{\xi}$.
$\nabla^{\prime\prime}$ defines the linear operator $\bar\partial_{E}: A^{p,q}_{B}(M,\, E)\to 
A^{p,q+1}_{B}(M,\, E)$ so that 
$\bar \partial_{E}( f\omega )=\bar\partial _{B}f \wedge \omega +f \bar\partial_{E}( \omega )$ for $f\in A^{0}_{B}(M), \omega\in A^{p,q}_{B}(M,\, E)$.
If a basic holomorphic vector bundle $E$ admits a basic Hermitian metric $h$, as complex case, we have a unique unitary connection $\nabla^{h}$ such that for any $X\in \langle \xi\rangle \oplus T^{0,1}$, $\nabla^{h}_{X}=\nabla^{\prime\prime}_{X}$.

\begin{example}\label{line}
Consider the ${\mathcal C}^{\infty}$-trivial complex line bundle $E\,=\,M\times \C\, 
\longrightarrow\, M$. For any $C\in\R$, we define the connection 
$
\nabla^{C}\,=\,d-2\pi\sqrt{-1}C\eta
$
on $E$.
Then, the curvature of $\nabla^{C}$ is $Cd\eta$. Since $d\eta\,\in\, 
A^{1,1}_{B}(M)$, this $\nabla^{C}$ induces a structure of a basic holomorphic bundle.
Consequently, we have a non-trivial holomorphic vector bundle structure $E_{C}$ on $E$ 
that depends on $C\,\in\, \C$. The basic cohomology class of $Cd\eta$ 
is the basic first Chern class of the basic vector bundle $E_{C}$. Thus 
$\{E_{C}\}_{C\in\R}$ is a family of basic vector bundles such that $E_{C}\,\not\cong\, 
E_{C^{\prime}}$ for every $C\,\not=\,C^{\prime}$. 
The standard constant Hermitian metric $h$ on the 
${\mathcal C}^{\infty}$ trivial line bundle $E\,=\,M\times \C$ is basic on $E_{C}$. 
The 
connection $\nabla^{C}$ is unitary for $h$, and hence $\nabla^{C}$ is the 
canonical connection for the basic Hermitian metric $h$.

Consider the flow $\R\times M \to M$ generated by $\xi$.
Define the lifting action $\R\times M\times \C\ni (t, x, v)\mapsto (x_{t}, e^{-2\pi C\sqrt{-1}t}v)\in E$ of the flow $\R\times M \to M$.
Then this induces the basic bundle structure $\nabla^{C}_{\xi}$ of $E_{C}$. 
\end{example}

\begin{example}
The Tanaka-Webster connection $\nabla^{TW} $ defines a 
structure of a basic holomorphic bundle on $T^{1,0}$.
Since we have $\nabla^{TW} _{\xi}(X)=[\xi,X]$,
the basic bundle structure $\nabla^{TW}_{\xi}$ is given by the action $\R\times T^{1,0}X\to T^{1,0}X$ which is the differential of the action $\R\times M\to M$ generated by the flow of $\xi$.

\end{example}

\subsection{Equivariant cohomology}
Assume that 
$M$ is compact and a Sasakian structure on $M$ is quasi-regular.
Consider the equivariant cohomology $H^{\ast}_{S^{1}}(M,R)=H^{\ast}(M\times_{S^{1}} ES^{1},R)$ for the
action $S^{1}\times M\to M$ with coefficients in a commutative ring $R$.
$H^{\ast}_{S^{1}}(M,\R)$ is the cohomology of the Cartan model $A^{\ast}(M)^{S^{1}}\otimes \R[u]$ with the differential $d_{S^{1}}$
so that $d_{S^{1}} u=0$, $d_{S^{1}}\omega= d\omega -i_{\xi}\omega u$ where $u$ is of degree $2$.
The natural inclusion $A^{\ast}_{B}(M)\to A^{\ast}(M)^{S^{1}}\otimes \R[u]$ is a cochain complex homomorphism.
Since the action $S^{1}\times M\to M$ is locally free, the induced map $H_{B}^{\ast}(M)\to H^{\ast}_{S^{1}}(M,\R)$ is an isomorphism (\cite[Section 5]{GS}).

We use the result in \cite{Mu} which says that equivariant first Chern classes in $H^{\ast}_{S^{1}}(M,\Z)$ classify lifts of the action $S^{1}\times M\to M$ on line bundles.
For our case, basic bundle structures $\nabla_{\xi}$ on line bundles can be considered as infinitesimal lifts of the action $S^{1}\times M\to M$.
Let $L_{1}\to M, L_{2}\to M$ be line bundles admitting lifts of the action $S^{1}\times M\to M$ and consider them as basic line bundles.
Assume that we have basic Hermitian metrics $h_{1}, h_{2}$ and basic unitary connections $\nabla_{1}, \nabla_{2}$ on $L_{1}$ and $L_{2}$ respectively.
We note that the equivariant first Chern classes in $H^{\ast}_{S^{1}}(M,\R)$ are images of the basic first Chern classes via the canonical map $H_{B}^{\ast}(M)\to H^{\ast}_{S^{1}}(M,\R)$.
Since $H_{B}^{\ast}(M)\to H^{\ast}_{S^{1}}(M,\R)$ is an isomorphism, we can say that 
if $c_{1,B}(L_{1})=c_{1,B}(L_{2})$, then $L_{1}^{l}$ and $L_{2}^{l}$ are equivariantly isomorphic for some positive integer $l$.

\begin{prop}\label{equiiso}
If $c_{1, B}(T^{1,0}_{M})=-C[d\eta]$ for some positive constant $C$, then for some positive integer $l$, the $l$-th power of  the anti-canonical bundle $\bigwedge^{n} T^{1,0}_{M}$ with the $S^{1}$-action given by the differential of the 
action $S^{1}\times M\to M$ is equivariantly isomorphic to $E_{-lC}$.
\end{prop}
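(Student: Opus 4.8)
The plan is to read the proposition as the special case of the general principle recalled immediately before it: two basic line bundles that both admit lifts of the action $S^{1}\times M\to M$ and that have equal basic first Chern classes have equivariantly isomorphic $l$-th powers for some positive integer $l$ (this is the consequence of \cite{Mu} combined with the isomorphism $H^{\ast}_{B}(M)\cong H^{\ast}_{S^{1}}(M,\R)$). So the entire argument reduces to exhibiting the two relevant line bundles, checking that they carry genuine $S^{1}$-lifts, and matching their basic first Chern classes.

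First I would compute the two classes. For $L_{1}=\bigwedge^{n}T^{1,0}_{M}$, the identity $c_{1}(\det V)=c_{1}(V)$ at the level of basic Chern forms gives $c_{1,B}(L_{1})=c_{1,B}(T^{1,0}_{M})=-C[d\eta]$ by hypothesis. For $L_{2}=E_{-C}$, Example~\ref{line} gives $c_{1,B}(E_{-C})=-C[d\eta]$. I would also record that tensor powers behave as expected: since the connection forms of the $\nabla^{C}$ add, one has $(E_{-C})^{l}=E_{-lC}$, and the flow lift on $(E_{-C})^{l}$ is the action $v\mapsto e^{2\pi lC\sqrt{-1}t}v$, matching that of $E_{-lC}$.

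The lift on $L_{1}$ is the honest $S^{1}$-lift obtained by differentiating the action $S^{1}\times M\to M$, exactly as for $T^{1,0}_{M}$ in the example following Example~\ref{line}. The delicate point, and the main obstacle, is that Example~\ref{line} only equips $E_{-C}$ with the infinitesimal lift $\nabla^{-C}_{\xi}$, i.e. the $\R$-action $v\mapsto e^{2\pi C\sqrt{-1}t}v$, which need not descend to a genuine $S^{1}$-action unless $C$ times the period of the $S^{1}$-action is integral. This is precisely where the power $l$ enters: because $L_{1}$ is genuinely $S^{1}$-equivariant, its class $-C[d\eta]=c_{1,B}(L_{1})$ lies in the image of $H^{2}_{S^{1}}(M,\Z)\to H^{2}_{S^{1}}(M,\R)\cong H^{2}_{B}(M)$, so $C[d\eta]$ is rational; hence for a suitable $l$ the $\R$-action on $E_{-lC}$ descends to an $S^{1}$-lift, making $E_{-lC}$ a genuine equivariant line bundle to which the classification applies. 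Verifying this descent carefully is the crux, since the general principle quoted above presupposes that both bundles already admit $S^{1}$-lifts.

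Finally I would conclude. With $(\bigwedge^{n}T^{1,0}_{M})^{l}$ and $E_{-lC}$ both carrying $S^{1}$-lifts and both having basic first Chern class $-lC[d\eta]$, their integral equivariant first Chern classes in $H^{2}_{S^{1}}(M,\Z)$ differ by a torsion element, since their images in $H^{2}_{S^{1}}(M,\R)$ coincide. Enlarging $l$ to a multiple that clears this torsion equates the integral equivariant classes, and \cite{Mu} then yields the equivariant isomorphism $(\bigwedge^{n}T^{1,0}_{M})^{l}\cong E_{-lC}$, which is the assertion.
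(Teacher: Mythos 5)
Your argument follows the same route the paper intends: the proposition is stated as a direct application of the preceding paragraph (the classification of $S^{1}$-lifts by integral equivariant first Chern classes from \cite{Mu}, combined with the isomorphism $H^{\ast}_{B}(M)\cong H^{\ast}_{S^{1}}(M,\R)$), applied to $L_{1}=\bigwedge^{n}T^{1,0}_{M}$ and $L_{2}=E_{-C}$, with quasi-regularity supplied by Remark~\ref{Neg}. You in fact spell out two points the paper leaves implicit --- that the infinitesimal lift on $E_{-lC}$ only descends to a genuine $S^{1}$-action for suitable $l$, and that a further power is needed to clear torsion in $H^{2}_{S^{1}}(M,\Z)$ --- so your write-up is correct and, if anything, more complete than the paper's.
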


\begin{remark}\label{Neg}
If $c_{1,B}(T^{1,0}_{M})=-C[d\eta]$ for a positive constant $C$, then the Sasakian structure $(T^{1,0}_{M}, \eta)$ is quasi-regular (\cite[Theorem 8.1.14]{BG}).
\end{remark}

\section{Variations of Hodge structure and uniformizations}
\subsection{Variations of Hodge structure over Sasakian manifolds}
Let $G$ be a connected semi-simple Lie group.
Assume that $G$ has the finite center and no compact factor.
A Hodge structure on $G$ is a real Hodge structure $\g_{\C}=\bigoplus_{p\in \Z}\g^{p,-p}$ of weight $0$ on the Lie algebra $\g$ of $G$ such that the Lie bracket on $\g$ is a morphism of real Hodge structures and $-B_{\g}$ is a polarization where $B_{\g}$ is the Killing form on $\g$.
Define the subalgebra $\frak{v}=\g^{0,0}\cup \g \subset \g$
and the subgroup $V=\exp(\frak{v})\subset G$.
Then $V$ is compact.
The subalgebra $\bigoplus_{p>0} \g^{ p.-p} \subset \g_{\C}$ defines a complex structure on $D=G/V$.
The complex manifold $D$ is called the {\em period domain}.
Define the subalgebra $\frak{p}=\bigoplus_{p\ge 0} \g^{ p.-p} $ and the subgroup $P=\exp(\frak{p})\subset G_{\C}$ in the complexification $G_{\C}$ of $G$.
Then $\frak{p}$ is a parabolic subalgebra in $\g_{\C}$ and $G/V$ is a dual manifold of $G/P$ in the sense of \cite{GrSc}.
The holomorphic tangent bundle $T^{1,0}_{D}$ is a homogeneous vector bundle given by $(G\times \bigoplus_{p>0} \g^{ p.-p})/V$ and hence we have the decomposition $T^{1,0}_{D}=\bigoplus_{p>0} E^{p,-p}$.
Consider the connection form $\lambda$ on the anti-canonical bundle $\bigwedge^m T^{1,0}_{D}$ given by the Maurer-Cartan form where $m=\dim_{\C} D$.
Let $\eta_{D}=-\lambda$. Then $d\eta$ is a non-degenerate $2$-form on $D$ which is positive on $\bigoplus_{p\, odd} E^{p,-p}$ and negative on $\bigoplus_{p\, even } E^{p,-p}$ see \cite[(4.23)]{GrSc}.
Let $(M,T^{1,0}_{M}, \eta)$ be a Sasakian manifold.
Denote by $\widetilde{M}$ the universal cover of $M$ at $x\in M$ and take the lifting Sasakian structure $(T^{1,0}_{\tilde{M}}, \tilde\eta)$.
A {\em $G$-variations of Hodge structure} is $(G, \rho, h)$ such that: 
\begin{enumerate}
\item $G$ is a connected semi-simple Lie group equipped with a Hodge structure.
\item $\rho:\pi_{1}(M,x)\to G$ is a representation.
\item $h: \widetilde{M}\to D$ is a $\rho$-equivariant holomorphic map satisfying $dh(T^{1,0}_{\widetilde{M}})\subset E^{1,-1}$.
\end{enumerate} 

\subsection{Uniformizing Variations of Hodge structure}
A Hodge structure on $G$ is {\em Hermitian} if $\g_{\C}=\g^{1,-1}\oplus \g^{0,0}\oplus \g^{-1,1}$.
In this case, $D$ is a Hermitian symmetric manifold of non-compact type and $d\eta_{D}$ is an invariant K\"ahler form on $D$.
Regarding $\eta$ as a contact $1$-form, $(T^{1,0}_{D},\eta_{D})$ is a $G$-invariant Sasakian structure on the circle bundle $S^{1}(\bigwedge^m T^{1,0}_{D})$.
We have $S^{1}(\bigwedge^m T^{1,0}_{D})=(G\times S^{1})/V=G/V^{\prime}$ where $V^{\prime}$ is the kernel of the non-trivial representation $V\to GL(\bigwedge^{m}\g^{1,-1})$.
$S^{1}(\bigwedge^m T^{1,0}_{D})$ is also $G$-homogeneous and the Sasakian structure $(T^{1,0}_{D},\eta_{D})$ is homogeneous
We can take the Reeb vector field $\xi_{D}$ as a fundamental vector field of the action $S^{1}\times (G\times S^{1})/V \ni (a, [g,t])\mapsto [g, a^{-1}t]\in (G\times S^{1})/V$.
Since $D$ is contractible, by trivializing $S^{1}(\bigwedge^m T^{1,0}_{D})\cong S^{1}\times D$, we have the universal covering $\widetilde{S^{1}}(\bigwedge^m T^{1,0}_{D})\to S^{1}(\bigwedge^m T^{1,0}_{D})$ with the infinite cyclic covering group.

Let $\Gamma\subset G$ be a discrete subgroup which acts freely and cocompactly on $S^{1}(\bigwedge^m T^{1,0}_{D})=G/V^{\prime}$.
We consider the compact Sasakian manifold $(\Gamma \backslash S^{1}(\bigwedge^m T^{1,0}_{D}), T^{1,0}_{D} \eta_{D})$.
We observe that:
\begin{itemize}
\item By the construction, we have $c_{1,B}(M)=-[d\eta_{D}]$.
\item Defining $h: \widetilde{S^{1}}(\bigwedge^m T^{1,0}_{D})\to D$ by the composition of the covering $\widetilde{S^{1}}(\bigwedge^m T^{1,0}_{D})\to S^{1}(\bigwedge^m T^{1,0}_{D})$ and the projection $ S^{1}(\bigwedge^m T^{1,0}_{D})\to D$ and $\rho: \pi_{1}(\Gamma \backslash S^{1}(\bigwedge^m T^{1,0}_{D}))\to G$ by the composition the quotient $\pi_{1}(\Gamma \backslash S^{1}(\bigwedge^m T^{1,0}_{D}))\to \Gamma$ by the covering group and the inclusion $\Gamma \to G$, we have the $G$-variation of Hodge structure $(G, \rho,h)$. 

\end{itemize} 
The main purpose of this section is to prove that these properties uniformize compact Sasakian manifolds by the homogeneous Sasakian structure

Let $(M,T^{1,0}_{M}, \eta)$ be a Sasakian manifold.
A $G$-variation of Hodge structure $(G, \rho, h)$ is {\em uniformizing} if 
a Hodge structure on $G$ is Hermitian and the differential $dh: T^{1,0}_{\widetilde{M}} \to T^{1,0}_{D}$ is an isomorphism at every fiber.

\begin{thm}
Let $(M, T^{1,0}_{M}, \eta)$ be a compact Sasakian manifold.
If $(M,T^{1,0}_{M}, \eta)$ admits a uniformizing $G$-variations of Hodge structure $(G,\rho, h)$ and $c_{1,B}(T_{M}^{1,0}
)=-C[d\eta]$ for a positive constant $C$, then
the universal covering $\widetilde{M}$ with the lifting of some $A^1_{B}$-deformation of $( T^{1,0}_{M}, \eta)$ is almost isomorphic to $\widetilde{S^{1}}(\bigwedge^m T^{1,0}_{D})$ with the lifting of the homogeneous Sasakian structure $(T^{1,0}_{D},\eta_{D})$.

\end{thm}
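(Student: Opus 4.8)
The plan is to turn the period map $h$ into a fibrewise connection-preserving isomorphism of $\R$-bundles over $D$, with the $A^1_B$-deformation serving precisely as the adjustment of the transverse metric that this requires.

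First I would record the consequence of the uniformizing hypothesis at the level of line bundles. Since $h$ is transverse holomorphic and $dh\colon T^{1,0}_{\widetilde M}\to T^{1,0}_{D}$ is a fibrewise isomorphism (so in particular $\dim_{\C}D=n=m$), taking top exterior powers yields a holomorphic isomorphism of basic line bundles $\bigwedge^{n}T^{1,0}_{\widetilde M}\cong h^{*}\bigwedge^{m}T^{1,0}_{D}$ on $\widetilde M$. On the $D$-side the anti-canonical bundle satisfies $c_{1,B}=-[d\eta_{D}]$ as recorded above, so Chern--Weil theory combined with the lifted hypothesis $c_{1,B}(T^{1,0}_{M})=-C[d\eta]$ gives $h^{*}[d\eta_{D}]=C[d\eta]$. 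Because $h$ is $\rho$-equivariant and $d\eta_{D}$ is $G$-invariant, the positive transverse $(1,1)$-form $h^{*}d\eta_{D}$ is $\pi_{1}(M)$-invariant and descends to a basic transverse Kähler form $\omega_{h}\in A^{1,1}_{B}(M)$ with $[\omega_{h}]=C[d\eta]$ in $H^{2}_{B}(M)$.

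Next I would produce the deformation. Since $M$ is compact, basic Hodge theory (the transverse $\partial_{B}\bar\partial_{B}$-lemma) applies, so $\omega_{h}-C\,d\eta=\sqrt{-1}\partial_{B}\bar\partial_{B}\phi$ for a basic function $\phi$. Setting $\tau=\tfrac{\sqrt{-1}}{2C}(\bar\partial_{B}-\partial_{B})\phi\in A^1_B$, the $A^1_B$-deformation $(T^{1,0}_{M\tau},\eta^{\tau})$ with $\eta^{\tau}=\eta+\tau$ satisfies $C\,d\eta^{\tau}=\omega_{h}$, and $d\eta^{\tau}$ is positive because $\omega_{h}$ is; the Reeb field and the transverse holomorphic foliation are unchanged, so $h$ remains a transverse biholomorphism for the deformed structure. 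Lifting to $\widetilde M$ gives $C\,d\tilde\eta^{\tau}=h^{*}d\eta_{D}$.

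Finally I would build the almost isomorphism. By Remark \ref{Neg} the structure is quasi-regular, and Proposition \ref{equiiso} identifies a power of the anti-canonical bundle with $E_{-lC}$, whose connection form is a multiple of $\eta$; this exhibits $h\colon\widetilde M\to D$ as a principal $\R$-bundle with connection form $\tilde\eta^{\tau}$ and Reeb generator $\xi$, while $P:=\widetilde{S^{1}}(\bigwedge^m T^{1,0}_{D})\to D$ is the principal $\R$-bundle with connection form $\eta_{D}$ and generator $\xi_{D}$. After rescaling $\tilde\eta^{\tau}\mapsto C\tilde\eta^{\tau}$ the two connections induce the same curvature $d\eta_{D}$ on the base. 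Since $D$ is contractible, both $\R$-bundles are trivial and their connections differ by an exact form $df$ on $D$; gauging by $f$ and covering $h$ by the resulting bundle isomorphism $F\colon\widetilde M\to P$ produces a diffeomorphism with $F^{*}\eta_{D}=C\tilde\eta^{\tau}$ and $F_{*}\xi$ a constant multiple of $\xi_{D}$, acting on transverse directions as the biholomorphism $dh$. Hence $F$ commutes with the almost contact structures, so by the lemma characterizing almost isomorphisms it is the desired almost isomorphism onto $(P,T^{1,0}_{D},\eta_{D})$. \textbf{The main obstacle is this last step:} verifying that $h$ is genuinely a \emph{principal} $\R$-bundle --- that the lifted Reeb flow is free and proper with quotient exactly $D$ --- and that the fibre coordinate matching $\eta_{D}$ to $C\tilde\eta^{\tau}$ can be chosen globally and compatibly with $\xi_{D}$. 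This is where the equivariant identification of Proposition \ref{equiiso} and the contractibility of $D$ must be combined carefully; once the total spaces are recognized as connected principal $\R$-bundles carrying equal-curvature connections, matching the connection forms --- hence the Sasakian structures up to rescaling --- is formal.
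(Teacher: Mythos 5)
Your opening moves (identifying $\bigwedge^{n}T^{1,0}_{\widetilde M}$ with $h^{*}\bigwedge^{m}T^{1,0}_{D}$ via $dh$, and computing $h^{*}[d\eta_{D}]=C[d\eta]$ in $H^{2}_{B}(M)$) agree with the paper. But the step you yourself flag as ``the main obstacle'' is a genuine gap, and it is the heart of the theorem: you never establish that $h\colon\widetilde M\to D$ is a principal $\R$-bundle whose structure action is the lifted Reeb flow. A priori $h$ is only a submersion with $dh(\tilde\xi)=0$; its image is open but not obviously all of $D$, its fibers are unions of lifted Reeb leaves but not obviously single connected orbits, and the Reeb action on a fiber is not obviously proper. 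The tools you point to for closing this (Proposition \ref{equiiso} plus contractibility of $D$) do not suffice: contractibility only pays off \emph{after} you know you have a locally trivial fibration. The paper closes exactly this gap by a different mechanism. It uses the $S^{1}$-equivariant trivialization $(\bigwedge^{n}T^{1,0}_{M})^{l}\cong E_{-lC}$ of Proposition \ref{equiiso} to produce a nowhere-vanishing equivariant section of $L^{l}=h^{*}(\bigwedge^{m}T^{1,0}_{D})^{l}$, hence an honest $\rho$-equivariant lift $f\colon\widetilde M\to G/V''$ of $h$ into (a finite quotient of) the model circle bundle, with $df(\tilde\xi)=lC\,\xi_{D}$. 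Then $f$ is a local diffeomorphism, $f^{*}g_{\eta_{D}}$ descends to the compact $M$ and is therefore complete, and a local isometry from a complete manifold is a covering map; simple connectivity of $\widetilde M$ then upgrades $f$ to a diffeomorphism onto $\widetilde{S^{1}}(\bigwedge^{m}T^{1,0}_{D})$. Some such completeness/properness argument is unavoidable, and your proposal contains none.

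Two smaller remarks. First, your construction of $\tau$ via the transverse $\partial_{B}\bar\partial_{B}$-lemma only matches the transverse K\"ahler forms $C\,d\eta^{\tau}=h^{*}d\eta_{D}$; you still owe a second gauge adjustment to match the contact forms themselves, and that adjustment changes $\tau$ by a closed basic $1$-form, so the deformation you exhibit at the outset is not yet the one the theorem asserts. The paper avoids both the $\partial_{B}\bar\partial_{B}$-lemma and the two-stage matching: once $f$ exists, it simply sets $\tau=-\tilde\eta+\frac{1}{C}f^{*}\eta_{D}$, checks $\tau(\tilde\xi)=0$ and $d\tau\in A^{2}_{B}$ so that $\tau\in A^{1}_{B}(\widetilde M)$ descends to $M$ by equivariance, and observes that $f$ is then tautologically an almost isomorphism for the deformed structure. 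Second, your final gauging step is fine in outline once the principal bundle structure is in hand, so the defect is localized; but as written the proof does not go through without the construction of $f$ (or an equivalent completeness argument).
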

\begin{proof}
By Proposition \ref{equiiso} and Remark \ref{Neg},  for some positive integer $l$, the $l$-th power of  $\bigwedge^n T^{1,0}_{M}$ is equivariantly isomorphic to $E_{-lC}$.
Consider the basic holomorphic line bundle $L$ over $M$ defined by the pull-back $h^{\ast}\bigwedge^m T^{1,0}_{D}$.
Then, Since $f$ is $\rho$-equivariant holomorphic and $dh: T^{1,0}_{\tilde{M}} \to T^{1,0}_{D}$ is an isomorphism at every fiber, $L$ is identified with $\bigwedge^m T^{1,0}_{M}$ via the differential $dh$.
We regard the $\rho$-equivariant map $h: \widetilde{M}\to D$ as a $V$-reduction $P$ of the flat bundle $(\widetilde{M}\times G)/\pi_{1}(M,x)$.
Then $L$ is induced by $P$ associated with the representation $V\to GL(\bigwedge \g^{1,-1})$.
Take an equivariant isomorphism $L^l\cong E_{-lC}$.
By $E_{-lC}=M\times \C$, corresponding to $M\ni x\mapsto (x,1)\in M\times \C$,
we have a nowhere vanishing section of $L^l$ and it induces an $\rho$-equivariant map $f: \widetilde{M}\to (G\times S^1)/V=(G\times S^{1})/V=G/V^{\prime\prime}$ where $V^{\prime\prime}$ is the kernel of the  representation $\Lambda^{l}$. 
$f$ is a lift of $h: \widetilde{M}\to D$.
$G/V^{\prime}$ is a $l$-sheeted covering of $G/V^{\prime\prime}$.
$(T^{1,0}_{D},\eta_{D})$ defines a Sasakian structure on $G/V^{\prime\prime}$.
By the equivariance of $L^l\cong E_{-lC}$, we have $df (\tilde\xi)=lC\xi_{D}$.
Since $M$ is compact and $f$ is $\rho$-equivariant, the Riemann metric $f^{\ast}g_{\eta_{D}}$ is complete on $\widetilde{M}$.
$f:\widetilde{M}\to G/V^{\prime\prime}$ is a local isometry and hence covering map.

Let $\tau=-\tilde\eta+\frac{1}{C}f^{\ast}\eta_{D}$.
Then, by $\tau(\tilde\xi)=0$ and $d\tau\in A^{2}_{B}(\widetilde{M})$, we have $\tau\in A^{1}_{B}(\widetilde{M})$.
The $A^{1}_{B}$-deformation $(T^{1,0}_{\widetilde{M}\tau}, \tilde\eta+\tau)$ of $(T^{1,0}_{\widetilde{M}}, \tilde\eta)$ is almost isomorphic to $(\widetilde {S^{1}}(\bigwedge^m T^{1,0}_{D}), T^{1,0}_{D},\eta_{D})$.
Since $f$ is $\rho$-equivariant, $(T^{1,0}_{\widetilde{M}\tau}, \tilde\eta+\tau)$ is defined on $M$.
\end{proof} 

\begin{remark}
In this proof, $f:\widetilde{M}\to G/V^{\prime}$  satisfies $df(T^{1,0}_{\widetilde{M}})=T^{1,0}_{D}$ modulo $\langle \xi\rangle $ but may not be a CR-map. Hence, the $A_{B}^1$-deformation $(T^{1,0}_{\widetilde{M}\tau}, \tilde\eta+\tau)$ is essential.
\end{remark}

\section{Higgs bundles and uniformizations}
\subsection{Higgs bundles over Sasakian manifolds}
Let $(M,T^{1,0}_{M}, \eta)$ be a compact Sasakian manifold.
A \textit{basic Higgs bundle} over $M$ is a pair
$(E, \,\theta)$ consisting of a basic holomorphic vector bundle $E$
and 
$\theta\,\in\, A^{1,0}_{B}(M,\,{\rm 
End}(E))$ satisfying the following two conditions:
$$\overline\partial_{{\rm 
End}(E)}\theta \,=\,0\ \ \text{ and }
\ \ \theta\wedge \theta\,=\,0\, .$$

We define the {\it degree} of a basic holomorphic vector bundle $E$ by 
$${\rm deg}(E)\,:=\,\int_{M}c_{1, B}(E)\wedge (d\eta)^{n-1}\wedge\eta\, .$$
Denote by ${\mathcal O}_{B}$ the sheaf of holomorphic functions on $M$, and for a holomorphic vector bundle $E$ on $M$, denote by 
${\mathcal O}_{B}(E)$ the sheaf of holomorphic sections of $E$. 
Consider ${\mathcal 
O}_{B}(E)$ as a coherent ${\mathcal O}_{B}$-sheaf.

For a basic Higgs 
bundle $(E,\, \theta)$, a {\em sub-Higgs sheaf} of $(E,\, \theta)$ is a coherent 
${\mathcal O}_{B}$-subsheaf $\mathcal V$ of ${\mathcal O}_{B}(E)$ such that $\theta ({\mathcal V})\, \subset\, {\mathcal 
V}\otimes \Omega_{B}$, where $\Omega_{B}$ is the sheaf 
of basic holomorphic $1$-forms on $M$. By \cite[Proposition 3.21]{BH}, if ${\rm rk} 
(\mathcal V)\,<\,{\rm rk}(E)$ and ${\mathcal O}_{B}(E)/\mathcal V$ is 
torsion-free, then there is a transversely analytic sub-variety $S\,\subset\, M$ of complex 
co-dimension at least 2 such that ${\mathcal V}\big\vert_{M\setminus S}$ is given by a basic holomorphic sub-bundle $V\,\subset\,
E\big\vert_{M\setminus S}$. The degree ${\rm deg}(\mathcal V)$ can be defined by integrating $c_{1, B}(V)\wedge (d\eta)^{n-1}\wedge\eta$
on this complement $M\setminus S$.

\begin{dfn}
We say that a basic Higgs bundle $(E,\, \theta)$ is {\em stable} if $E$ admits a basic Hermitian metric and for every 
sub-Higgs sheaf ${\mathcal V}$ of $(E,\, \theta)$ such that ${\rm rk} (\mathcal V)\,<\,{\rm 
rk}(E)$ and ${\mathcal O}_{B}(E)/\mathcal V$ is torsion-free, 
the inequality
\[\frac{{\rm deg}(\mathcal V)}{{\rm rk} (\mathcal V)}\,<\,\frac{{\rm deg}(E)}{{\rm rk}(E)}
\]
holds.

A basic Higgs bundle $(E,\, \theta)$ is called {\em polystable} if
$$
(E,\, \theta)\,=\, \bigoplus_{i=1}^k (E_i,\, \theta_i)\, ,
$$
where each $(E_i,\, \theta_i)$ is a stable Higgs bundle with
\[\frac{{\rm deg}( E_i)}{{\rm rk} ( E_i)}\,=\,\frac{{\rm deg}(E)}{{\rm rk}(E)}\, .
\]
\end{dfn}

Let $(E, \,\theta)$ be a basic Higgs bundle over a compact Sasakian manifold $M$. 
Assume that $E$ admits a basic Hermitian metric $h$.
Define $\overline\theta_{h}\,\in\, A^{0,1}_{B}(M,\,{\rm End}(E))$ by
$
h(\theta (e_{1}),\, e_{2})\,=\,h(e_{1}, \,\overline\theta_{h} (e_{2}))
$
for all $e_{1},\,e_{2}\,\in\, E_x$ and all $x\, \in\, M$.
Define the canonical connection
$
D^{h}\,=\,\nabla^{h}+\theta+\overline\theta_{h}
$
on $E$.

\begin{thm}[\cite{BK, BK2}]\label{harmonicmetric}
If a basic Higgs bundle $(E, \,\theta)$ is stable and satisfies
$$c_{1,B}(E)\,=\,0\qquad {\rm and} \qquad \int_{M} c_{2,B}(E)\wedge(d\eta)^{n-2}\wedge \eta\,=\,0,$$
then there exists a basic Hermitian metric $h$ so that the canonical connection $D^{h}$ is flat 
and such Hermitian metric is unique up to a positive constant.
Moreover, such metric $h$ is a Harmonic metric on the flat bundle $(E,D^{h})$ with respect to the Sasakian metric $g_{\eta}$ and hence the flat bundle $(E,D^{h})$ is semi-simple (\cite{Cor}).

\end{thm}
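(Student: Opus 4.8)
The plan is to establish the Kobayashi--Hitchin correspondence for basic Higgs bundles over $(M,\mathcal{F}_{\xi})$, adapting the Hitchin--Simpson strategy so that all constructions take place within the basic complexes $A^{\ast}_{B}(M,\,E)$. The analytically decisive step, and the main obstacle, is to produce a basic Hermitian--Einstein metric. Because $(E,\theta)$ is stable and $c_{1,B}(E)=0$, we have $\deg(E)=\int_{M}c_{1,B}(E)\wedge(d\eta)^{n-1}\wedge\eta=0$, so I would solve the Hermitian--Einstein equation $\sqrt{-1}\,\Lambda F_{D^{h}}=0$ for a basic metric $h$, where $\Lambda$ denotes contraction with the transverse K\"ahler form $d\eta$ and $F_{D^{h}}$ is the curvature of the canonical (Hitchin--Simpson) connection $D^{h}=\nabla^{h}+\theta+\overline\theta_{h}$; the Einstein constant is zero precisely because $\deg(E)=0$. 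Existence is obtained by the basic analogue of the Donaldson--Uhlenbeck--Yau and Simpson theorems: one runs the nonlinear heat flow within the space of basic Hermitian metrics, using that every operator in play descends to a transverse elliptic operator along $\mathcal{F}_{\xi}$, and that a failure of convergence would, by a weak-subobject argument, produce a destabilizing sub-Higgs sheaf, contradicting stability. This is exactly the content supplied by \cite{BK, BK2}.

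Granting the metric $h$, the next step upgrades the Einstein condition to flatness of $D^{h}$ by means of the two Chern-number hypotheses. Here I would invoke the basic analogue of the Bogomolov--L\"ubke identity for Hermitian--Einstein Higgs bundles, which expresses the discriminant $\int_{M}\big(2c_{2,B}(E)-\tfrac{r-1}{r}c_{1,B}(E)^{2}\big)\wedge(d\eta)^{n-2}\wedge\eta$, with $r=\mathrm{rk}(E)$, as a positive universal constant times the squared $L^{2}$-norm of the primitive trace-free part of the Hitchin--Simpson curvature $F_{D^{h}}$; the Einstein equation $\Lambda F_{D^{h}}=0$ is what removes the non-primitive contribution and makes this identity exact. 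With $c_{1,B}(E)=0$ the discriminant reduces to $2\int_{M}c_{2,B}(E)\wedge(d\eta)^{n-2}\wedge\eta$, which vanishes by hypothesis, so the $L^{2}$-norm is zero and hence $F_{D^{h}}\equiv 0$: the canonical connection $D^{h}$ is flat. Uniqueness up to a positive constant is the standard consequence of stability, since two flat Hermitian--Einstein metrics differ by a positive $h$-self-adjoint automorphism whose distinct eigenspaces would otherwise split off proper sub-Higgs sheaves of the same slope, forcing the automorphism to be a homothety.

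Finally, I would identify $h$ as a harmonic metric and read off semisimplicity. By construction $D^{h}=\nabla^{h}+(\theta+\overline\theta_{h})$ is the decomposition of the now-flat connection into its $g_{\eta}$-unitary part $\nabla^{h}$ and its $h$-self-adjoint part $\theta+\overline\theta_{h}$, and in the transverse K\"ahler geometry of $(M,g_{\eta})$ the Einstein equation $\Lambda F_{D^{h}}=0$ is exactly the vanishing of the tension field, i.e.\ the harmonicity of $h$ on the flat bundle $(E,D^{h})$ with respect to $g_{\eta}$. The Sasakian analogue of Corlette's theorem \cite{Cor}---a flat bundle carrying a harmonic metric has reductive monodromy---then yields that $(E,D^{h})$ is semi-simple. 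Throughout, the only point requiring care is to keep every object basic, i.e.\ annihilated by $i_{\xi}$ and invariant under $\nabla_{\xi}$, so that the Hodge-theoretic identities are applied inside the basic complex; since $h$, $\nabla^{h}$, $\theta$ and $F_{D^{h}}$ are all basic by hypothesis and construction, this holds automatically.
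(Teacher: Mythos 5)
The paper gives no proof of this theorem beyond citing \cite{BK, BK2}, and your outline accurately reproduces the strategy those references carry out: a transversely elliptic Donaldson--Uhlenbeck--Yau/Simpson heat-flow argument producing a basic Hermitian--Einstein metric from stability, the Bogomolov--L\"ubke-type identity turning the vanishing of $\int_{M}c_{2,B}(E)\wedge(d\eta)^{n-2}\wedge\eta$ (with $c_{1,B}(E)=0$) into flatness of $D^{h}$, uniqueness from stability, and harmonicity plus semisimplicity via the Sasakian analogue of \cite{Cor}. Your proposal is correct and takes essentially the same approach as the cited source.
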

This hermitian metric is said to be a {\em harmonic metric} on $(E,\theta)$.

A {\em polarized complex basic variation of Hodge structure of weight $w$} over $M$ is $(E=\bigoplus_{p+q=w}E^{p,q},D,H)$ so that 
\begin{enumerate}
\item 
$(E,D)$ is a complex flat vector bundle.
\item $\bigoplus_{p+q=w}E^{p,q}$ is a direct sum of $C^{\infty}$-subbundles such that $D_{\xi} {\mathcal C}^{\infty}(E^{p,q})\subset {\mathcal C}^{\infty}(E^{p,q})$.
\item (The Griffiths transversality conditions) For any $X\in T^{1,0}_{M}$ (resp. $X\in T^{0,1}_{M}$), $D_{X}{\mathcal C}^{\infty}(E^{p,q})\subset {\mathcal C}^{\infty}(E^{p,q})\oplus {\mathcal C}^{\infty}(E^{p-1,q+1}) $
(resp. $D_{X}{\mathcal C}^{\infty}(E^{p,q})\subset {\mathcal C}^{\infty}(E^{p,q})\oplus {\mathcal C}^{\infty}(E^{p+1,q-1}) $)

\item $H$ is 
a parallel Hermitian form so that
decomposition $\bigoplus_{p+q=w}E^{p,q}$ is orthogonal and $h$ is positive on $E^{p,q}$ for even $p$ and negative for odd $p$.
\end{enumerate}
Define the basic bundle structure $\nabla_{\xi}$ on $E$ and each $E^{p,q}$.
By the flatness $D^2=0$ and the Griffiths transversality conditions, we have:
\begin{itemize}
\item
We have $D=\nabla+\theta+\bar\theta$ such that $\nabla $ is a connection on each $E^{p,q}$ and $\theta\in A^{1}(M, {\rm Hom}(E^{p,q}, E^{p-1,q+1}))$, $\bar\theta\in A^{1}(M, {\rm Hom}(E^{p,q}, E^{p+1,q-1}))$.
\item 
$\nabla $ gives a basic holomorphic bundle structure $\nabla^{\prime\prime}$, $\theta\in A^{1,0}(M, {\rm Hom}(E^{p,q}, E^{p-1,q+1}))$, $\bar\partial_{{\rm 
End}(E)}\theta=0$ and 
$\theta\wedge \theta=0$.
Thus $(E, \theta)$ is a basic Higgs bundle.
\item Define the basic Hermitian metric $h$ on $E$ by $(-1)^pH$ on each $E^{p,q}$. Then $\nabla$ is the canonical connection associated with $h$ and $\bar\theta=\overline\theta_{h}$ as the above sense.
Thus $h$ is a Harmonic metric on a basic Higgs bundle $(E,\theta)$.

\end{itemize}
Let $G=SU(E_{x}, H_{x})$ and $V=S(\Pi_{p+q=w} U(E^{p,q}_{x}, H_{x}))$.
Take the monodromy $\rho :\pi_{1}(M,x)\to GL(E_{x})$ of the flat bundle $(E,D)$.
Since $h$ is parallel, we can assume $\rho(\pi_{1}(M,x))\subset G$.
Let $\g$ be the Lie algebra of $G$.
The decomposition $E=\bigoplus_{p+q=w}E^{p,q}$ gives the decomposition $\g_{\C}=\bigoplus_{r}\g^{r,-r}$.
This decomposition is a Hodge structure on $G$ and the period domain is $G/V$ (cf. \cite{Sch}).
Considering $h$ as a $V$-reduction of the flat bundle $(\tilde{M}\times G)/\pi_{1}(M,x)$, we have a $\rho$-equivariant map $h: \tilde{M}\to G/V$ such that $dh= \theta$ on $T^{1,0}_{\tilde M}$.
By the Griffiths transversality conditions, $(G, \rho, h)$ is a $G$-variation of Hodge structure.

Let $(E, \,\theta)$ be a polystable basic Higgs bundle satisfying
$$c_{1,B}(E)\,=\,0\qquad {\rm and} \qquad \int_{M} c_{2,B}(E)\wedge(d\eta)^{n-2}\wedge \eta\,=\,0.$$
By Theorem \ref{harmonicmetric}, we have a harmonic metric $h$ on $(E,\theta)$.
For any $t\in U(1)$, we have the new polystable basic Higgs bundle $(E, \,t\theta)$ and we can easily check that $h$ is also a
harmonic metric on $(E,\, t \theta)$.
Assume that $U(1)$ acts on $E$ by holomorphic automorphisms via $\mu : U(1)\to {\rm Aut} (E)$ so that $\mu(t)\theta \mu(t)^{-1}=t\theta $.
Then, $\mu(t)$ is an isomorphism between basic Higgs bundles $(E, \,\theta)$ and $(E, \,t\theta)$.
By the harmonicity of $h$ on both $(E, \,\theta)$ and $(E, \,t\theta)$ and the uniqueness of harmonic metrics,
we can say that the action $\mu : U(1)\to {\rm Aut} (E)$ preserves a harmonic metric $h$.
Fix $w\in \Z$, define the sub-bundle $E^{p,w-p}$ consisting of $e\in E$ such that for any $t\in U(1)$, $\mu(t)e=t^{p}e$.
The polarized complex basic variation of Hodge structure of weight $w$ $(E=\bigoplus_{p+q=w}E^{p,q},D^{h},H)$ 
is defined by $H=(-1)^{p}h $ on each $E^{p,q}$.

\subsection{Uniformizations associated with the canonical Higgs bundles}
Define the Higgs bundle $(E_{M},\, \theta_{M})$ by the following way:
\begin{itemize}
\item $E_{M}\,=\,\C_{M}\oplus T^{1,0}_{M}$ where $\C_{M}$ is the trivial basic holomorphic line bundle on $M$, and
\item $\theta_{M}\,=\,\left(
\begin{array}{cc}
0&0 \\
1 & 0
\end{array}
\right)$ where $1$ is the identity element in ${\rm End} (T^{1,0}_{M})=(T^{1,0}_{M})^{\ast}\otimes T^{1,0}_{M}$ regarded as an element in $A^{1,0}_{B}(M,{\rm Hom}(\C_{M}, T^{1,0}_{M}))$.
\end{itemize}

\begin{thm}
We assume:
\begin{enumerate}
\item The basic Higgs bundle $(E_{M},\, \theta_{M})$ is stable.
\item The equality 
$$
\int_{M} \left(2c_{2, B}(T^{1,0}_{M}) -
\frac{n}{n+1}c_{1, B}(T^{1,0}_{M})^2\right)\wedge(d\eta)^{n-2}\wedge\eta \, =\, 0
$$
holds.
\item $c_{1, B}(T^{1,0}_{M})=-C[d\eta]$ for some positive constant $C$.
\end{enumerate}
Then the universal covering $\widetilde{M}$ with the lifting of some $A^1_{B}$-deformation of $( T^{1,0}_{M}, \eta)$ is almost isomorphic to $\widetilde{S^{1}}(\bigwedge^m T^{1,0}_{D})$ with the lifting of the homogeneous Sasakian structure $(T^{1,0}_{D},\eta_{D})$ for $D=SU(n,1)/S(U(n)\times U(1))$.

\end{thm}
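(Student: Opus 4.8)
The plan is to reduce the statement to the uniformization theorem for uniformizing $G$-variations of Hodge structure proved in the previous section, constructing such a structure out of the stable basic Higgs bundle $(E_M,\theta_M)$ by means of the correspondence of Theorem \ref{harmonicmetric}. Hypothesis (3), namely $c_{1,B}(T^{1,0}_{M})=-C[d\eta]$, is exactly the numerical hypothesis required by that theorem, so once a uniformizing variation has been produced, the conclusion — including the $A^1_{B}$-deformation and the identification of the period domain — follows immediately.

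First I would normalize the first Chern class by twisting. Put $L:=E_{C/(n+1)}$ as in Example \ref{line}, so that $c_{1,B}(L)=\frac{C}{n+1}[d\eta]=-\frac{1}{n+1}c_{1,B}(T^{1,0}_{M})$, and set $(E,\theta):=(E_M,\theta_M)\otimes L$. Tensoring by a line bundle shifts every slope by $\deg L$ and carries sub-Higgs sheaves to sub-Higgs sheaves, so stability is preserved and $(E,\theta)$ is stable by hypothesis (1). The Chern classes of a rank-$(n+1)$ bundle tensored with a line bundle give $c_{1,B}(E)=c_{1,B}(T^{1,0}_{M})+(n+1)c_{1,B}(L)=0$ and
\[
c_{2,B}(E)=c_{2,B}(T^{1,0}_{M})+n\,c_{1,B}(T^{1,0}_{M})c_{1,B}(L)+\binom{n+1}{2}c_{1,B}(L)^2=c_{2,B}(T^{1,0}_{M})-\frac{n}{2(n+1)}c_{1,B}(T^{1,0}_{M})^2.
\]
Therefore hypothesis (2) is precisely the equality $\int_M c_{2,B}(E)\wedge(d\eta)^{n-2}\wedge\eta=0$, and both vanishing conditions of Theorem \ref{harmonicmetric} hold for $(E,\theta)$. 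Applying that theorem yields a harmonic metric $h$ whose canonical connection $D^h$ is flat, and $(E,\theta)$ is in particular polystable.

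Next I would exhibit the $U(1)$-fixed-point structure that upgrades $(E,\theta)$ to a polarized complex basic variation of Hodge structure, following the construction recalled just before the statement. The tautological splitting $E=(\C_M\otimes L)\oplus(T^{1,0}_{M}\otimes L)$ defines an action $\mu\colon U(1)\to{\rm Aut}(E)$ with distinct weights on the two summands; since $\theta=\theta_M\otimes{\rm id}$ sends the first summand into the second and annihilates the second, the weights can be chosen so that $\mu(t)\theta\mu(t)^{-1}=t\theta$. By uniqueness of the harmonic metric, $\mu$ preserves $h$, and the resulting Hodge grading is exactly this two-step splitting. We thus obtain a polarized complex basic variation of Hodge structure with Hodge pieces of ranks $1$ and $n$, whose polarization $H=(-1)^{p}h$ has opposite signs on them. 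Consequently $G=SU(E_x,H_x)=SU(n,1)$, $V=S(U(n)\times U(1))$, the period domain is $D=SU(n,1)/S(U(n)\times U(1))$, and the Hodge structure on $\g$ is Hermitian because $E$ carries only two Hodge types.

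Finally, the associated $G$-variation of Hodge structure $(G,\rho,h)$ satisfies $dh=\theta$ on $T^{1,0}_{\widetilde{M}}$, and because $\theta_M$ is the tautological identity $\C_M\to T^{1,0}_{M}$, its off-diagonal block (the $L$-factors cancelling in the relevant Hom) is the fiberwise identity; hence $dh\colon T^{1,0}_{\widetilde{M}}\to T^{1,0}_{D}$ is an isomorphism at every fiber and $(G,\rho,h)$ is uniformizing. Combining this with hypothesis (3), the uniformization theorem for uniformizing variations of Hodge structure delivers the asserted almost isomorphism with $D=SU(n,1)/S(U(n)\times U(1))$. The step I expect to require the most care is this last identification: one must verify that the Hodge grading manufactured by the harmonic-metric/$U(1)$ machinery genuinely coincides with the geometric splitting $\C_M\oplus T^{1,0}_{M}$ (up to the twist by $L$), so that the tautological nature of $\theta_M$ translates into the fiberwise-isomorphism (uniformizing) condition; by contrast, the Chern-class bookkeeping and the invocations of Theorem \ref{harmonicmetric} and of the variation-of-Hodge-structure uniformization theorem are routine.
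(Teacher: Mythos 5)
Your proposal is correct and follows essentially the same route as the paper: twist $(E_M,\theta_M)$ by the line bundle $E_{C/(n+1)}$ to kill $c_{1,B}$, verify that hypothesis (2) becomes the vanishing of $\int_M c_{2,B}\wedge(d\eta)^{n-2}\wedge\eta$ for the twisted bundle, invoke Theorem \ref{harmonicmetric} to get a harmonic metric, assemble the weight-one polarized basic variation of Hodge structure with $E^{1,0}=E_{C'}$ and $E^{0,1}=T^{1,0}_M\otimes E_{C'}$ so that $G=SU(n,1)$, observe that the tautological $\theta_M$ makes $dh$ a fiberwise isomorphism, and conclude via the uniformization theorem of the preceding section. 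Your extra detail on the $U(1)$-action and the explicit Chern-class bookkeeping only makes explicit what the paper leaves implicit.
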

\begin{proof}
By $c_{1, B}(T^{1,0}_{M})=-C[d\eta]$, $c_{1,B}(E\otimes E_{C^{\prime}})=0$ for some $C^{\prime}\in \R$.
By 
$$
\int_{M} \left(2c_{2, B}(T^{1,0}_{M}) -
\frac{n}{n+1}c_{1, B}(T^{1,0}_{M})^2\right)\wedge(d\eta)^{n-2}\wedge\eta \, =\, 0,
$$
we have
$ \int_{M} c_{2,B}(E_{M}\otimes E_{C^{\prime}})\wedge(d\eta)^{n-2}\wedge \eta\,=\,0.$
$\nabla^{\prime\prime}=\nabla_{\vert \langle \xi\rangle \oplus T^{0,1}}$ defines a holomorphic bundle structure on $L$.
Thus, $(E_{M}\otimes E_{C^{\prime}},\, \theta\otimes {\rm Id }_{L})$ is a basic Higgs bundle.
Since $(E_{M},\, \theta_{M})$ is stable, $(E_{M}\otimes E_{C^{\prime}}\, \theta_{M}\otimes {\rm Id }_{E_{C^{\prime}}})$ is also stable.
Let $h$ be a harmonic metric on $(E\otimes E_{C^{\prime}},\, \theta\otimes {\rm Id }_{E_{C^{\prime}}})$.
Define $E^{0,1}=T^{1,0}\otimes E_{C^{\prime}}$ and $E^{1,0}=E_{C^{\prime}}$.
We have the polarized complex basic variation of Hodge structure of weight $1$ $(E_{M}\otimes E_{C^{\prime}}=E^{1,0}\oplus E^{0,1},D^{h},H)$.
$SU(E_{x}, H_{x})\cong SU(n,1)$ and the Hodge structure on $SU(n,1)$ is Hermitian and the period domain $D$ is the unit ball $SU(n,1)/ S(U(n)\times U(1))$.
We obtain a $SU(n,1)$-variation of Hodge structure $(SU(n,1), \rho, h)$.
By the definition, for the map $h: \tilde{M}\to SU(n,1)/ S(U(n)\times U(1))$, $dh=\theta$ is an isomorphism on each fiber of $T^{1,0}_{\widetilde{M}}$.
Hence, $(SU(n,1), \rho, h)$ is uniformizing.
\end{proof}

\begin{remark}
If the basic Higgs bundle $(E_{M}, \theta_{M})$ is stable,
then the Miyaoka-Yau type inequality
$$
\int_{M} \left(2c_{2, B}(T^{1,0}_{M}) -
\frac{n}{n+1}c_{1, B}(T^{1,0}_{M})^2\right)\wedge(d\eta)^{n-2}\wedge\eta \, \geq\, 0\, .
$$
holds.
Thus the second condition means that the equality of the Miyaoka-Yau type inequality holds.

In \cite{Zh}, Zhang proves that if $c_{1, B}(T^{1,0}_{M})=C[d\eta]$ for some positive constant $C$ and the certain condition in terms of Sasaki-Einstein geometry holds, then the Miyaoka-Yau type inequality holds and the equality is a criterion for uniformizing a higher dimensional compact Sasakian manifold by the odd-dimensional sphere.
\end{remark}

\begin{remark}
In case $D=SU(n,1)/S(U(n)\times U(1))$,
the line bundle $\bigwedge^{n} T^{1,0}_{D}$ is a homogeneous vector bundle corresponding to the representation $S(U(n)\times U(1))\ni \left(
\begin{array}{cc}
U & 0 \\
0 & {\rm det}(U)^{-1}
\end{array}
\right)\mapsto {\rm det}(U)^{n+1}$.
Define the subgroup $U^{\prime}(n)=\left\{\left(
\begin{array}{cc}
U & 0 \\
0 & {\rm det}(U)^{-1}
\end{array}
\right): {\rm det}(U)^{n+1}=1\right\}$ in $SU(n,1)$.
We have $ S^{1}( \bigwedge^{n} T^{1,0}_{D})=SU(n,1)/U^{\prime}(n)=PU(n,1)/ SU(n)$.
Taking the universal covering $\widetilde{PU}(n,1)$ of $PU(n,1)$, we have $\widetilde{S^{1}}(\bigwedge^m T^{1,0}_{D})=\widetilde{PU}(n,1)/SU(n)$.
The lifting of the Sasakian structure $(T_{D}^{1,0},\eta_{D} )$ is $\widetilde{PU}(n,1)$-invariant.
\end{remark}

Assume $n=1$.
Then $H^{2}_{B}(M)=\langle [d\eta]\rangle$.
In this case, if $c_{1, B}(T^{1,0})=-C[d\eta]$ for some positive constant $C$, then the Higgs bundle $(E_{M},\, \theta_{M})$ is stable.
By $H^{4}_{B}(M)=0$, the equality 
$$
\int_{M} \left(2c_{2, B}(T^{1,0}) -
\frac{n}{n+1}c_{1, B}(T^{1,0})^2\right)\wedge(d\eta)^{n-2}\wedge\eta \, =\, 0
$$
is trivial.
\begin{cor}
Let $(M,T^{1,0}_{M},\eta)$ be a compact $3$-dimensional Sasakian manifold with $c_{1, B}(T^{1,0})=-C[d\eta]$.
Then the universal covering $\widetilde{M}$ with the lifting of some $A^1_{B}$-deformation of $( T^{1,0}_{M}, \eta)$ is almost isomorphic to $\widetilde{PU}(1,1)$ with a left-invariant Sasakian structure.
\end{cor}

\end{document}